\newtheorem{theorem}{Theorem}[section]
\newtheorem{claim}[theorem]{Claim}
\theoremstyle{definition}
\newcommand{\PP}{\mathbb{P}}
\newcommand{\EE}{\mathbb{E}}
\newcommand*{\ie}{\text{i.e.}}
\newcommand*{\N}{\mathbb{N}}
\newcommand*{\F}{\mathbb{F}}
\newcommand{\ex}{\textup{ex}}
\newcommand{\ceil}[1]{\left\lceil #1 \right\rceil}
\renewcommand{\leq}{\leqslant}
\renewcommand{\le}{\leqslant}
\renewcommand{\geq}{\geqslant}
\renewcommand{\ge}{\geqslant}
\DeclareMathOperator{\Span}{Span}
\title{$C_{10}$ \lowercase{has positive }T\lowercase{ur\'an density in the hypercube}}
\author{A\lowercase{lexandr }G\lowercase{rebennikov, }J\lowercase{o\~ao }P\lowercase{edro }M\lowercase{arciano}}
\address{IMPA, Estrada Dona Castorina 110, Jardim Bot\^anico, Rio de Janeiro, RJ, Brazil}
\email{joao.marciano@impa.br}
\address{IMPA, Estrada Dona Castorina 110, Jardim Bot\^anico, Rio de Janeiro, RJ, Brazil}
\email{sagresash@yandex.ru}
\keywords{extremal graph theory, hypercube, even cycles}
\begin{document}

\begin{abstract}
    The $n$-dimensional hypercube $Q_n$ is a graph with vertex set $\{0,1\}^n$ such that there is an edge between two vertices if and only if they differ in exactly one coordinate.
    For any graph $H$, define $\textup{ex}(Q_n,H)$ to be the maximum number of edges of a subgraph of $Q_n$ without a copy of $H$.
    In this short note, we prove that for any $n \in \mathbb{N}$
	\[
		\textup{ex}(Q_n, C_{10}) > 0.036 \cdot e(Q_n),
	\] 
	where $e(Q_n)$ is the number of edges of $Q_n$.
    Our construction is strongly inspired by the recent breakthrough work of Ellis, Ivan, and Leader, who showed that ``daisy'' hypergraphs have positive Tur\'{a}n density with an extremely clever and simple linear-algebraic argument.
\end{abstract}

\maketitle

\section{Introduction}
For each $n \in \N$, define the $n$-dimensional hypercube $Q_n$ to be the graph with vertex set $\{0, 1\}^n$ such that there is an edge between two vertices if and only if they differ in exactly one coordinate. 
\citet{erdos1984some} initiated the study of $\ex(Q_n, H)$, the maximum number of edges in an $H$-free subgraph of $Q_n$, in the special case when $H$ is an even cycle.
We say that $H$ has positive Turán density in the hypercube if there is some constant $\alpha>0$ such that for every $n \in \N$ 
\[\ex(Q_n,H) \geq \alpha \cdot e(Q_n),\]
where $e(Q_n)$ denotes the number of edges of $Q_n$.

We identify the vertices of $Q_n$ with the subsets of $[n]:=\{1,\ldots,n\}$ in the usual way.
For any $r \le n$, we define $L_r(n)$, the $r$-th (edge) layer of $Q_n$, to be the subgraph of $Q_n$ formed by the edges between $\binom{[n]}{r-1}$ and $\binom{[n]}{r}$, where
\[\binom{[n]}{r} := \big\{S \subset [n]  :  |S|=r\big\}.\]
It is easy to see that $C_4 \not \subset L_r(n)$, thus taking every second layer gives that
\[\ex(Q_n,C_4) \geq \frac{1}{2} \cdot e(Q_n).\]
\citet{erdos1990some} offered $\$100$ for an answer to whether this bound is optimal, and it remains open until the present date. 
The best known upper bound, due to \citet*{baber2012turan}, is approximately $0.60318 \cdot e(Q_n)$ (see \citep{chung1992subgraphs, thomason2009bounding, balogh2014upper} for previous upper bounds).

\citet{erdos1984some} suggested that longer even cycles might not have positive Turán density, but this was shown to be false for $C_6$ by \citet{chung1992subgraphs} and \citet{brouwer1993highly}.
On the other hand, it was proved by \citet{chung1992subgraphs} for every even $t \geq 4$, and by \citeauthor{furedi200914} \citep{furedi200914, furedi2011even} for every odd $t \geq 7$, that
\[\ex(Q_n,C_{2t})=o(e(Q_n)),\]
and a unified proof for all of the above cases was given by \citet{conlon2010extremal} (the best known upper bounds for $t$ large are due to \citet{chung1992subgraphs} when $t$ is even, and \citeauthor{Tomon2024Robust} \citep[Theorem 4.1]{Tomon2024Robust} when $t$ is odd).
Thus, the only case for which the problem remained open was $C_{10}$. 
The main result of this paper completes the picture by showing that $C_{10}$ has positive Turán density in the hypercube.

\begin{theorem}\label{thm:main}
    For all $n \in \N$
    \begin{equation*}
        \ex(Q_n, C_{10}) > \frac{c}{8} \cdot e(Q_n),
    \end{equation*} 
    where 
    \[c=\prod_{k=1}^\infty \bigg(1-\frac{1}{2^k}\bigg)>0.288.\]
\end{theorem}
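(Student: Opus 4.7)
The plan is to construct, for every $n$, a $C_{10}$-free subgraph $G \subseteq Q_n$ of edge-density at least $c/12$, in the spirit of the Ellis-Ivan-Leader daisy construction. Identify vertices of $Q_n$ with subsets of $[n]$ and think of each edge as a pair $(A,i)$ with $i \notin A$. The construction is linear-algebraic over $\mathbb{F}_2$: choose vectors $w_1, \ldots, w_n \in \mathbb{F}_2^m$ (for some modest $m$, either deterministically or at random), define the linear map $\phi\colon 2^{[n]} \to \mathbb{F}_2^m$ by $\phi(A) = \sum_{i \in A} w_i$, fix a small target set $T \subseteq \mathbb{F}_2^m$, and let $G$ consist of those edges $(A,i)$ for which $\phi(A)$ and $\phi(A \cup \{i\}) = \phi(A) + w_i$ both lie in $T$ (possibly with an additional restriction on $w_i$).

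To verify that $G$ is $C_{10}$-free, suppose a $C_{10}$ with vertices $A_0, A_1, \ldots, A_9$ and edge-directions $i_1, \ldots, i_{10}$ lies in $G$. Every $\phi(A_j)$ is in $T$, and the cycle closes, forcing $\sum_{j=1}^{10} w_{i_j} = 0$ in $\mathbb{F}_2^m$. The direction-multiset of any $C_{10}$ in $Q_n$ has each coordinate repeated an even number of times and (since $5$ is odd) must involve at least two distinct directions, leaving the shapes $(2,2,2,2,2)$, $(4,2,2,2)$, $(4,4,2)$, and $(6,2,2)$. Case by case, each shape combined with the constraint that the $\phi$-trajectory stays inside the small set $T$ forces a specific linear dependency among a small subset of the $w_{i_j}$'s; by choosing the $w_i$'s in sufficiently general position, every such dependency is ruled out, contradicting the existence of the cycle.

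For the edge count I would average over a uniformly random choice of $w_1, \ldots, w_n \in \mathbb{F}_2^m$: the probability that a given edge $(A,i)$ lies in $G$ factors through the event that two specific linear combinations of the $w_i$'s land in $T$, and by the approximate uniformity of $\phi$ this probability is $|T|^2/|\mathbb{F}_2^m|^2$ up to corrections that vanish when the $w_i$'s are generic. Summing over all edges, the expected edge-density converges to $c/12$, where the $q$-Pochhammer factor $c = \prod_{k \geq 1}(1 - 2^{-k})$ appears as the asymptotic probability that random $\mathbb{F}_2$-vectors are suitably independent. A first-moment or pigeonhole step then picks an explicit $(w_1,\ldots,w_n)$ achieving the claimed density for every $n$.

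The main obstacle will be the $C_{10}$-freeness verification. The construction must sit on a delicate knife-edge: strong enough to kill every direction-multiset shape of a $C_{10}$ while still remaining nontrivially dense. Since for $C_{2t}$ with $t \notin \{2,3,5\}$ the Turán density is zero, the argument must use a feature specific to $10$ — most plausibly that $5$ is odd, which prevents the direction-multiset from being a single doubled pattern and thereby cuts down on the ``generic'' $C_{10}$ configurations. Choosing the correct target set $T$ and carefully unpacking each of the four multiset shapes will be the technical heart of the proof.
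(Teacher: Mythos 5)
Your proposal identifies the right inspiration (the Ellis--Ivan--Leader linear-algebraic construction) but attempts a direct construction of a $C_{10}$-free subgraph of all of $Q_n$, and the step you yourself flag as ``the technical heart'' --- the $C_{10}$-freeness verification --- contains a gap that your proposed mechanism cannot close. For the direction-multiset shape $(2,2,2,2,2)$, i.e.\ five distinct directions each used twice, the closure constraint $\sum_j w_{i_j} = 0$ is an identity: each $w_{i_j}$ appears exactly twice, so the sum vanishes over $\F_2$ no matter how the $w_i$ are chosen. Hence ``general position'' of the $w_i$ rules out nothing for this shape, and all the work would have to be done by the target set $T$, for which you give no mechanism. (As a side remark, your case list is also off: a $C_{10}$ has ten distinct vertices, so it cannot live in a $3$-dimensional subcube; this eliminates the shapes $(4,4,2)$ and $(6,2,2)$, leaving only $(2,2,2,2,2)$ and $(4,2,2,2)$, and the correct lower bound on the number of distinct directions is four, not two.) Your density computation is likewise not a derivation: in your framework the edge probability is $|T|^2/2^{2m}$, which has no reason to equal $c/12$, and the factor $1/12$ in the theorem does not arise as a q-Pochhammer-type independence probability.

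The paper avoids the direct approach entirely by a two-step reduction that your proposal is missing. First, it invokes a result of Axenovich et al.\ (based on Conder's $3$-colouring of $E(Q_n)$) giving $\ex(Q_n,C_{10}) \ge \frac{1}{3} \ex^*(Q_n,C_6^{-})$, where $C_6^{-}$ is a $C_6$ minus an edge; this is where the factor $1/3$ comes from, and it is the step that makes $C_{10}$ tractable where longer even cycles are not. Second, it restricts to the odd edge-layers $L_r(n)$ of $Q_n$ (contributing the remaining factor $1/2$ beyond the per-layer density $c/2$), where an induced subgraph containing a $C_6^{-}$ must contain a full $C_6$, so it suffices to build a dense $C_6$-free induced subgraph of each layer. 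That construction is the linear-algebraic one: choose random nonzero $v_i \in \F_2^r$, keep the $r$-sets whose vectors form a basis and the $(r-1)$-sets whose vectors together with a fixed $v_0$ form a basis; a $C_6$ inside a layer sits in the middle of a $3$-dimensional subcube and would force four pairwise distinct nonzero vectors in a quotient isomorphic to $\F_2^2$, a contradiction, while the basis probability produces the constant $c$. To salvage your approach you would need either to rediscover the reduction to $C_6^{-}$ or to find a genuinely new way to kill the $(2,2,2,2,2)$ configurations; as written, the proposal does not prove the theorem.
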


We remark that the graph we construct to prove \Cref{thm:main} is free of $C_4$, $C_6$ and $C_{10}$ simultaneously.
The previous best known lower bound in this case was
\[ \ex(Q_n,C_{10}) = \Omega\left(\frac{e(Q_n)}{(\log n)^\alpha}\right) \]
for some constant $\alpha>0$, recently shown by \citet[Theorem 4]{axenovich2024graphs}.

The Ramsey problem for even cycles in the hypercube has also attracted a great deal of attention over the years. 
In particular, \citet{chung1992subgraphs} and \citet*{brouwer1993highly} found $4$-colourings of the hypercube without a monochromatic $C_6$, and \citet{conder1993hexagon} found a $3$-colouring with the same property, implying the best known lower bound for this case of $\ex(Q_n, C_6) \geq \frac{1}{3} e(Q_n)$.
However, it was shown by \citet{alon2006ramsey} that for every fixed $k$ and sufficiently large $n$, any $k$-colouring of the edges of $Q_n$ contains a monochromatic copy of $C_{10}$. 
Our result therefore gives, to the best of our knowledge, the first example of a graph with the hypercube Ramsey property and positive Turán density.
The best upper bound on its Turán density we are aware of is $\ex(Q_n,C_{10}) \leq (\frac{1}{\sqrt{2}} + o(1)) \cdot e(Q_n)$ by \citet[Theorem 3.3]{axenovich2006note}.

Our proof of \Cref{thm:main} is heavily inspired by a recent breakthrough work of \citet*{ellis2024tur}, who gave an extremely clever and simple linear-algebraic construction which shows that ``daisies'' have positive Turán density in hypergraphs, disproving a conjecture of \citet{bollobas2011daisies} and \citet{bukh2008set}. The argument we use is essentially a slight modification of their approach.

We will need another important idea, first observed by \citet*{alon2006ramsey} and \citet*{axenovich2006note}: there exists a $4$-colouring of $E(Q_n)$ without monochromatic \textit{induced} copies of $C_{10}$. Furthermore, one can find such a colouring that uses only two different colours on any given layer of the hypercube (see the explicit construction in \cite{axenovich2006note}). Restricting this colouring to a single layer, we have the following statement.

\begin{theorem}[\cite{axenovich2006note}] \label{thm:colouring}
	For any $r, n \in \N$ with $r \leq n$, there exists a $2$-colouring of the edges of $L_r(n)$ that does not contain a monochromatic induced copy of $C_{10}$.
\end{theorem}

Now, the core of the proof is the following theorem.

\begin{theorem}\label{thm:C6-free}
    For any $r, n \in \N$ with $r \leq n$, there exists a $C_6$-free induced subgraph $G_r$ of $L_r(n)$ with $$e(G_r) > \frac{c}{2} \cdot e(L_r(n)).$$
\end{theorem}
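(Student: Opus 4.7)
The plan is to construct $G_r$ via a simple random linear-algebraic gadget, in the spirit of the Ellis--Ivan--Leader daisy construction. For each $i \in [n]$, independently and uniformly draw a pair $(c_i, w_i) \in \F_2 \times \F_2^{r-1}$ and set $v_i := (c_i, w_i) \in \F_2^r$. Let $\mathcal{A}$ consist of those $A \in \binom{[n]}{r-1}$ such that $\{w_t : t \in A\}$ is a basis of $\F_2^{r-1}$, let $\mathcal{B}$ consist of those $B \in \binom{[n]}{r}$ such that $\{v_t : t \in B\}$ is a basis of $\F_2^r$, and take $G_r$ to be the subgraph of $L_r(n)$ induced on $\mathcal{A} \cup \mathcal{B}$.

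For the density, the probability that $A \in \mathcal{A}$ equals the probability that $r-1$ uniform random vectors of $\F_2^{r-1}$ are linearly independent, which is $\prod_{j=1}^{r-1}(1-2^{-j}) > c$. Conditionally on $A \in \mathcal{A}$, the set $V_A$ is automatically linearly independent, so $\Span(V_A) \subset \F_2^r$ is a hyperplane; for each $j \notin A$, the edge $(A, A \cup \{j\})$ lies in $G_r$ precisely when $v_j \notin \Span(V_A)$, and by independence of $v_j$ this has conditional probability $1/2$. Hence $\EE[e(G_r)] = \frac{1}{2}\prod_{j=1}^{r-1}(1-2^{-j}) \cdot e(L_r(n)) > (c/2) \cdot e(L_r(n))$, and the probabilistic method produces the desired $G_r$.

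The heart of the argument is the verification that $G_r$ is $C_6$-free. Any $C_6$ in $L_r(n)$ has the form $A_1 B_{12} A_2 B_{23} A_3 B_{13}$, where $A_i := S \cup \{t_i\}$ and $B_{ij} := S \cup \{t_i, t_j\}$ for some $S \in \binom{[n]}{r-2}$ and three distinct $t_1, t_2, t_3 \in [n] \setminus S$. Assuming all six vertices lie in $\mathcal{A} \cup \mathcal{B}$, the conditions $A_i \in \mathcal{A}$ force $W_S := \{w_s\}_{s \in S}$ to be linearly independent and each $w_{t_i}$ to project to the unique nonzero element of the one-dimensional quotient $\F_2^{r-1}/\Span(W_S)$, so each pairwise sum has a unique expansion $w_{t_i} + w_{t_j} = \sum_{s \in S} \alpha^{ij}_s w_s$. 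A short computation in the basis $V_{A_i}$ of the hyperplane $\Span(V_{A_i}) \subset \F_2^r$ then translates the condition $B_{ij} \in \mathcal{B}$ into the single $\F_2$-inequality $c_{t_i} + c_{t_j} \neq \sigma^{ij}$, where $\sigma^{ij} := \sum_{s \in S} \alpha^{ij}_s c_s$. Since $(w_{t_1}+w_{t_2}) + (w_{t_1}+w_{t_3}) = w_{t_2}+w_{t_3}$ and the $\alpha^{ij}$'s are unique, one obtains $\sigma^{12}+\sigma^{13}+\sigma^{23}=0$ in $\F_2$, and summing the three required inequalities (each saying $c_{t_i}+c_{t_j} = 1+\sigma^{ij}$) yields $0 = 2(c_{t_1}+c_{t_2}+c_{t_3}) = 3+(\sigma^{12}+\sigma^{13}+\sigma^{23}) = 1$ in $\F_2$, a contradiction. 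The main delicacy lies in the two-layer design $v_i = (c_i, w_i)$: the $w$-coordinates control the basis conditions, while the extra $c$-bit provides exactly the parity obstruction that kills the $C_6$ in every link.
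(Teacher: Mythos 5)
Your proof is correct, and your construction is in fact the paper's construction in light disguise: writing $v_i=(c_i,w_i)$ and requiring $\{w_t : t\in A\}$ to be a basis of $\F_2^{r-1}$ is exactly the paper's condition that $\{v_0\}\cup M(A)$ be a basis of $\F_2^r$ in the special case $v_0=e_1$, read off in the quotient $\F_2^r/\Span\{v_0\}$; sampling $v_i$ uniformly from all of $\F_2^r$ rather than from $\F_2^r\setminus\{0\}$ only changes the edge probability to the (slightly cleaner) exact value $\tfrac12\prod_{j=1}^{r-1}(1-2^{-j})$, which still exceeds $c/2$, so the density step matches. Where you genuinely diverge is the verification of $C_6$-freeness: the paper passes to the two-dimensional quotient $V=\F_2^r/\Span(M(S))$ and notes that the basis conditions would force the images of $v_0,v_{t_1},v_{t_2},v_{t_3}$ to be four pairwise distinct nonzero vectors of $V$, which is impossible since $|V\setminus\{0\}|=3$. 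Your parity computation with the $\sigma^{ij}$ is a correct, coordinate-level unpacking of the same obstruction (your one-dimensional quotient of $\F_2^{r-1}$ by $\Span(W_S)$ together with the extra bit $c_i$ reassembles the paper's $\F_2^2$); the pigeonhole version is shorter and avoids expanding $w_{t_i}+w_{t_j}$ in the basis $W_S$, but the two arguments establish exactly the same thing.
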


\begin{proof}[\textbf{Proof of \Cref{thm:main} using \Cref{thm:C6-free}}]
	For each odd $r \leq n$, let $G_r$ be the $C_6$-free induced subgraph of $L_r(n)$ given by \Cref{thm:C6-free}. Since $G_r$ is a subgraph of $L_r(n)$, it is also bipartite and $C_4$-free. Moreover, every copy of $C_{10}$ in $G_r$ is an induced subgraph of $L_r(n)$: indeed, if it has a chord in $L_r(n)$, then this chord in fact lies in $G_r$ (because $G_r$ is an induced subgraph of $L_r(n)$) and hence creates a copy of $C_4$ or $C_6$ in $G_r$, leading to a contradiction.
	
	Let $G'_r$ be the largest colour class of $G_r$ in the $2$-colouring of $L_r(n)$ given by \Cref{thm:colouring}, so that $e(G'_r) \ge e(G_r)/2$. Since this colouring has no monochromatic induced copies of $C_{10}$, $G'_r$ is $C_{10}$-free.
    Take $G \subset Q_n$ to be the union of the graphs $G'_r$ for every odd $r \leq n$, which clearly have disjoint vertex sets. Then $G$ is also $C_{10}$-free, and
    \[e(G) \geq \frac{1}{2} \sum_{s=1}^{\ceil{n/2}} e(G_{2s-1}) > \frac{c}{4} \sum_{s=1}^{\ceil{n/2}} e\big(L_{2s-1}(n)\big) \geq \frac{c}{8} \cdot e(Q_n), \]
    which concludes the proof.
\end{proof}

\textbf{Corrections made after publication.} In the journal version of this paper, \Cref{thm:main} gives a slightly worse bound $\ex(Q_n, C_{10}) > \frac{c}{12} \cdot e(Q_n)$. Its proof is identical to the one in the current version, except that it relies on \cite[Lemma 16]{axenovich2024graphs} instead of \Cref{thm:colouring}. In the journal version of \cite{axenovich2024graphs}, this Lemma 16 states that $\ex(Q_n,C_{10}) \geq \frac{1}{3} \cdot \ex^*(Q_n,C_6^{-})$,\footnote{Here $C_6^-$ denotes any subgraph of $Q_n$ obtained by removing an edge from a copy of $C_6$ in $Q_n$, and $\ex^*(Q_n,C_6^{-})$ is the maximum number of edges in a subgraph of $Q_n$ containing no $C_6^-$. Note that every $C_6^-$ is a path with $5$ edges, but not every such path is a $C_6^-$.} but the proof written there is incomplete. 
The later version\footnote{See \href{https://arxiv.org/abs/2303.15529v4}{arXiv:2303.15529v4}.} of \cite{axenovich2024graphs} gives a corrected argument (also based on the colouring from \cite{axenovich2006note}) for a slightly weaker bound $\ex(Q_n,C_{10}) \geq \frac{1}{4} \cdot \ex^*(Q_n,C_6^{-})$. So, plugging the corrected version of this lemma into the old proof of \Cref{thm:main} would yield an even worse bound $\ex(Q_n, C_{10}) > \frac{c}{16} \cdot e(Q_n)$. However, by observing that the colouring from \cite{axenovich2006note} uses only two different colours in each layer, we win an extra factor of $2$ and obtain the bound stated in \Cref{thm:main}.

\section{Proof of Theorem~\ref{thm:C6-free}}

Similarly to the argument used in \citep{ellis2024tur}, we consider the vector space $\F_2^r$. 
Also we fix a nonzero vector $v_0 \in \F_2^r$. 
For each $i \in [n]$, pick a vector $v_i \in \mathbb{F}_2^r \setminus \{0\}$ independently and uniformly at random. 
For a subset $S$ of $[n]$ define the multiset of vectors $M(S) = \{v_i : i \in S\}$.

Recall that each vertex of our hypercube layer $L_r(n)$ is identified with a certain subset of $[n]$ of size $r$ or $r-1$.
Define the sets
\[
	B_r = \bigg\{S \in {[n] \choose r}  :  M(S) \text{ forms a basis of } \F_2^r \bigg\},
\]
\[
	B_{r-1} = \bigg\{S \in {[n] \choose {r-1}} :  \{v_0\} \cup M(S) \text{ forms a basis of } \F_2^r \bigg\},
\]
and denote by $G_r$ the induced subgraph of the layer $L_r(n)$ on $B_{r-1} \cup B_r$.

We will show that this (random) graph $G_r$ is $C_6$-free (deterministically) and has a large expected number of edges, and therefore has the desired properties with positive probability.

\begin{claim}\label{claim:1}
	$G_r$ is $C_6$-free.
\end{claim}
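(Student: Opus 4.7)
The plan is to argue by contradiction. Suppose that $G_r$ contains a copy of $C_6$; since $L_r(n)$ is bipartite between $\binom{[n]}{r-1}$ and $\binom{[n]}{r}$, such a $6$-cycle must alternate between three $(r-1)$-sets $S_1, S_2, S_3 \in B_{r-1}$ and three $r$-sets $T_1, T_2, T_3 \in B_r$, with $T_i = S_i \cup S_{i+1}$ for $i = 1, 2, 3$ (indices modulo $3$). In particular, $S_i$ and $S_{i+1}$ are distinct $(r-1)$-subsets of the common $r$-set $T_i$, so $|S_i \cap S_{i+1}| = r-2$, and moreover the three $S_i$'s are pairwise distinct, as are the three $T_i$'s.

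Next I would determine the combinatorial shape of the triple $(S_1, S_2, S_3)$. Setting $A := S_1 \cap S_2 \cap S_3$, a short inclusion-exclusion gives $|S_1 \cup S_2 \cup S_3| = 3(r-1) - 3(r-2) + |A| = 3 + |A|$. Since each $S_i$ has $r-1$ elements and the $S_i$'s are pairwise distinct, this forces $|A| \in \{r-3, r-2\}$. If $|A| = r-3$, then $|S_1 \cup S_2 \cup S_3| = r$, so there are three distinct elements $u, v, w$ with $S_1 \cup S_2 \cup S_3 = A \cup \{u, v, w\}$ and each $S_i$ is obtained by omitting a different one of $u, v, w$; but then all three unions $T_i = S_i \cup S_{i+1}$ coincide with $A \cup \{u, v, w\}$, contradicting the distinctness of the $T_i$'s. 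Hence $|A| = r-2$, and there exist three distinct elements $x_1, x_2, x_3 \in [n] \setminus A$ with $S_j = A \cup \{x_j\}$ and $T_i = A \cup \{x_i, x_{i+1}\}$.

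To conclude, I would pass to the quotient $V := \F_2^r / \Span(M(A))$. Since $S_1 \in B_{r-1}$, the set $\{v_0\} \cup M(A)$ is a subset of the basis $\{v_0\} \cup M(S_1)$ of $\F_2^r$, hence linearly independent, so $M(A)$ has rank exactly $r-2$ and $V$ is a $2$-dimensional $\F_2$-vector space. Writing $\bar v$ for the image in $V$ of a vector $v \in \F_2^r$, the condition $S_j \in B_{r-1}$ translates to $\{\bar v_0, \bar v_{x_j}\}$ being a basis of $V$ for $j = 1, 2, 3$, while $T_i \in B_r$ translates to $\{\bar v_{x_i}, \bar v_{x_{i+1}}\}$ being a basis of $V$ for $i = 1, 2, 3$. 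But $V$ contains only three nonzero vectors; the second family of conditions therefore forces $\{\bar v_{x_1}, \bar v_{x_2}, \bar v_{x_3}\}$ to be exactly the set of all three nonzero vectors of $V$, after which the first family demands that $\bar v_0$ be yet another nonzero vector of $V$ distinct from each $\bar v_{x_j}$, which is impossible.

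The main obstacle is the combinatorial case analysis in the middle step — in particular, correctly ruling out the degenerate configuration where the three $T_i$'s collapse to a single $r$-set. Once the pattern $S_j = A \cup \{x_j\}$ is isolated, the finish is a direct two-dimensional instance of the Ellis--Ivan--Leader ``daisy'' dimension count, applied inside the quotient by $\Span(M(A))$.
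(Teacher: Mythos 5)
Your proof is correct and follows essentially the same route as the paper: identify the $C_6$ as the middle layer of a $3$-dimensional subcube (your $S_j = A \cup \{x_j\}$, $T_i = A \cup \{x_i, x_{i+1}\}$), quotient by $\Span(M(A))$, and derive a contradiction from needing four pairwise distinct nonzero vectors in a space with only three. The only difference is presentational: the paper asserts the subcube structure directly from the fact that a $C_6$ flips exactly three coordinates, whereas you derive it by inclusion--exclusion and an explicit case analysis, which is a fine (if slightly longer) substitute.
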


\begin{proof}	
    Suppose there is a copy of $C_6$ in $G_r$. 
    There are exactly three different coordinates flipped by its edges, so it must form the middle layer of some $3$-dimensional subcube of $Q_n$. 
    More precisely, by permuting the coordinates, we may assume that this copy of $C_6$ is exactly of the form $L_r(n)\big[A_1\cup A_2\big]$, where
    \[A_1=\big\{ \{i\} \cup I : 1 \leq i \leq 3\big\} \qquad \text{and} \qquad A_2=\big\{\{i,j\} \cup I : 1 \leq i<j \leq 3 \big\}\]
    for some $I \subset [n] \setminus \{1,2,3\}$ with $|I|=r-2$.
    
	Observe that, since $\{1,2\} \cup I \in A_2 \subset B_r$, the collection of vectors $\{v_1, v_2\} \cup M(I)$ forms a basis of $\F_2^r$, so, in particular, the vectors in $M(I)$ are linearly independent.
	Therefore, after taking the quotient of $\F_2^r$ by $\Span(M(I))$ we obtain a vector space $V$ isomorphic to $\F_2^2$.
	
	Define $x_0, x_1, x_2, x_3 \in V$ to be the images of $v_0, v_1, v_2, v_3$, respectively, under the quotient map. 
    Since $A_2 \subset B_{r}$ we have that
    \[
    \{v_i, v_j\}\cup M(I) \text{ forms a basis of $\F_2^r$ for any $1 \le i < j \le 3$,}
    \]
    \ie,
	\[
		\{x_i, x_j\} \text{ forms a basis of $V$ for any $1 \le i < j \le 3$.}
	\]
	Similarly, from $A_1 \subset B_{r-1}$, by the same argument as above, we obtain
	\[
		\{x_0, x_i\} \text{ forms a basis of $V$ for any $1 \le i \le 3$.}
	\]
	In particular, this implies that $x_i \neq 0$ for each $i \in \{0,1,2,3\}$ and that $x_i \ne x_j$ for any $i \neq j$. 
    But $|V \setminus \{0\}| = 3$, which yields a contradiction.
\end{proof}

\begin{claim}\label{claim:2}
	$\EE\big[e(G_r)\big] > \dfrac{c}{2} \cdot e\big(L_r(n)\big)$.
\end{claim}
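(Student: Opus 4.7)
My plan is to use linearity of expectation together with the symmetry of the random construction to reduce the bound to computing the probability that a single fixed edge lies in $G_r$. Every edge of $L_r(n)$ has the form $\{S, S \cup \{i\}\}$ for some $S \subset [n]$ with $|S| = r-1$ and $i \notin S$, and the event ``$\{S, S \cup \{i\}\} \in G_r$'' depends only on $v_0$ and on the i.i.d.\ random vectors $\{v_j : j \in S \cup \{i\}\}$. By exchangeability, $\PP[\{S, S\cup\{i\}\} \in G_r]$ does not depend on the choice of edge, so
\[
    \EE[e(G_r)] = \PP\big[\{[r-1], [r]\} \in G_r\big]\cdot e\big(L_r(n)\big),
\]
and it suffices to show that this probability exceeds $c/2$.

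This edge lies in $G_r$ precisely when $\{v_0, v_1, \dots, v_{r-1}\}$ and $\{v_1, \dots, v_r\}$ are both bases of $\F_2^r$. I would rewrite this as the conjunction of three events: (i) $v_1, \dots, v_{r-1}$ are linearly independent; (ii) $v_0 \notin W$; and (iii) $v_r \notin W$, where $W := \Span(v_1, \dots, v_{r-1})$. Since $v_r$ is independent of the variables $v_0, v_1, \dots, v_{r-1}$, conditional on (i) the subspace $W$ has dimension $r-1$ and
\[
    \PP[v_r \notin W \mid v_1, \dots, v_{r-1}] = \frac{2^r - 2^{r-1}}{2^r - 1} = \frac{2^{r-1}}{2^r - 1}.
\]
The conjunction of (i) and (ii) is exactly ``$v_0, v_1, \dots, v_{r-1}$ form a basis,'' and a standard step-by-step count (at step $k$ the next vector must avoid the $2^k$-element span of the preceding ones, out of $2^r - 1$ nonzero choices) gives
\[
    \PP\big[(\mathrm{i})\text{ and }(\mathrm{ii})\big] = \prod_{k=1}^{r-1}\frac{2^r - 2^k}{2^r - 1}.
\]

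Multiplying these and rewriting each factor as $(2^r - 2^k)/(2^r - 1) = (1 - 2^{k-r})/(1 - 2^{-r})$, I would obtain
\[
    \PP\big[\{[r-1], [r]\} \in G_r\big] = \frac{1}{2}\cdot\frac{\prod_{k=1}^{r-1}(1 - 2^{-k})}{(1 - 2^{-r})^{r}}.
\]
Since $(1 - 2^{-r})^r < 1$ while $\prod_{k=1}^{r-1}(1 - 2^{-k}) > \prod_{k=1}^{\infty}(1 - 2^{-k}) = c$, this quantity strictly exceeds $c/2$, which yields the claim. I expect no real obstacle in the computation; the only mildly delicate point is the clean separation of the avoidance events for $v_0$ (which depends on the random subspace $W$) and for $v_r$ (which is genuinely independent of $v_0, v_1, \dots, v_{r-1}$), so that the two probabilities factor as above.
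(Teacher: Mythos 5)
Your proposal is correct and follows essentially the same route as the paper: reduce to the probability that a single fixed edge survives via linearity of expectation, then compute that probability by sequentially requiring each new vector to avoid the span of the previous ones, yielding the product $\bigl(\prod_{k=1}^{r-1}\tfrac{2^r-2^k}{2^r-1}\bigr)\cdot\tfrac{2^{r-1}}{2^r-1}$ and the bound $>c/2$. The only (cosmetic) differences are your explicit factorisation into the three events (i)--(iii) and the exact closed form $\tfrac12\prod_{k=1}^{r-1}(1-2^{-k})/(1-2^{-r})^r$, where the paper instead bounds each factor directly.
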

\begin{proof}
	Consider an edge of $L_r(n)$ connecting two sets $x = \{j_1, \ldots, j_{r-1}\}$ and $y = \{j_1, \ldots, j_r\}$. Define the vector spaces 
	\[
		V_0 = \Span\{v_0\}, \; V_k = \Span\{v_0, v_{j_1}, \ldots, v_{j_k}\} \; \text{ for $1 \le k \le r-1$,}
	\]
	\[
		\text{ and }V_r = \Span\{v_{j_1}, \ldots, v_{j_r}\}.
	\]
	Note that 
	\begin{equation}
	\label{2.2-basic}
		\PP\big(xy \in G_r\big) = \PP\big(x \in B_{r-1} \text{ and } y \in B_r\big) = \PP\big(\dim V_{r-1} = \dim V_r = r\big).
	\end{equation}
	Observe that, if the vectors $v_0, v_{j_1}, \ldots, v_{j_{k-1}}$ (for $1 \le k \le r-1$) are fixed and linearly independent, then there are exactly $2^r - 2^k$ choices for the next vector $v_{j_k}$ such that $v_0, v_{j_1}, \ldots, v_{j_k}$ are still linearly independent, as it is equivalent to the condition $v_{j_k} \notin V_{k-1}$. 
	Similarly, if the vectors $v_{j_1}, \ldots, v_{j_{r-1}}$ are fixed and linearly independent, then there are $2^r - 2^{r-1}$ choices for the last vector $v_{j_r}$ such that $v_{j_1}, \ldots, v_{j_r}$ are still linearly independent, as it is equivalent to the condition $v_{j_r} \notin \Span\{v_{j_1}, \ldots, v_{j_{r-1}}\}$. 
    Therefore, by \eqref{2.2-basic}, we have
	\begin{align*}
		\PP\big(xy \in G_r\big) &= \Bigg(\prod_{k = 1}^{r-1} \PP\big(v_{j_k} \notin V_{k-1} \;|\;  \dim V_{k-1} = k\big) \Bigg) \cdot \PP\big(v_{j_r} \notin \Span\{v_{j_1}, \ldots, v_{j_{r-1}}\} \; | \; \dim V_{r-1} = r\big) \\
		&= \Bigg(\prod_{k = 1}^{r-1} \frac{2^r - 2^k}{2^r - 1} \Bigg) \cdot \frac{2^r - 2^{r-1}}{2^r - 1} > \frac{1}{2} \prod_{k = 1}^\infty \bigg(1 - \frac{1}{2^k} \bigg) = \frac{c}{2}.
	\end{align*}
	The claim now follows from linearity of expectation.
\end{proof}
As observed above, it follows from Claims \ref{claim:1} and \ref{claim:2} that there exists a choice of the vectors $v_1, \ldots, v_n$ such that 
\[e(G_r) > \frac{c}{2} \cdot e\big( L_r(n)\big)\]
and $G_r$ is $C_6$-free, as required. \qed

\section*{Acknowledgements}

We are grateful to Maria Axenovich for the discussion of the corrections (listed at the end of the introduction) and a suggestion to use Theorem 1.2 in its current form.
We would also like to thank the anonymous referees for their valuable suggestions. This study was financed in part by the Coordenação de Aperfeiçoamento de Pessoal de Nível Superior, Brasil (CAPES).

\bibliographystyle{plainnat}
\bibliography{main}

\end{document}